
%
\documentclass[10pt,leqno]{amsart}
%
%
\usepackage{amsmath}%
\usepackage{amsfonts}%
\usepackage{amssymb}%
\usepackage{graphicx}
%
\newtheorem{theorem}{Theorem}
\theoremstyle{plain}

\newtheorem{lemma}{Lemma}

\newtheorem{preremark}{Remark}

\numberwithin{equation}{section}

\begin{document}

\title[Energy Conservation by Weak Solutions]{On the energy conservation by weak solutions of the relativistic Vlasov-Maxwell system}
\author{Reinel Sospedra-Alfonso}
\address
{Department of Mathematics and Statistics\newline%
\indent University of Victoria, PO BOX 3045 STN CSC, Victoria BC V8W 3P4}%
\email[R. Sospedra-Alfonso]{sospedra@math.uvic.ca}%
\keywords{Vlasov-Maxwell, weak solutions, conservation of the total energy}%

\begin{abstract}
We show that weak solutions of the relativistic Vlasov-Maxwell system preserve the total energy provided that the electromagnetic field is locally of bounded variation and, for any $\lambda>0$, the one-particle distribution function has a square integrable $\lambda$-moment in the momentum variable.
\end{abstract}
\maketitle


\section{Introduction}\label{intro}
Consider an ensemble of relativistic charged particles that interact through their self-induced electromagnetic field. If collisions among the particles are so improbable that they can be neglected, then the ensemble can be modeled by the so-called relativistic Vlasov-Maxwell (RVM) system. At any given time $t\in]0,\infty[$, the RVM system is characterized by the one-particle distribution function $f=f(t,x,p)$ with position $x\in\mathbb{R}^3$ and momentum $p\in\mathbb{R}^3$. The self-induced electric and magnetic fields are denoted by $E=E(t,x)$ and $B=B(t,x)$, respectively. Setting all physical constants to one, the model equations for a single particle species read  
\begin{equation}
\frac{\partial f}{\partial t}+v\cdot\nabla_{x}f+\left(E+v\times B\right)\cdot\nabla_{p}f=0
\label{Vlasov Equation}
\end{equation}
\begin{equation}
\frac{\partial E}{\partial t}-\nabla\times B=-4\pi j
\label{Maxwell Evolution 1}
\end{equation}
\begin{equation}
\frac{\partial B}{\partial t}+\nabla\times E=0
\label{Maxwell Evolution 2}
\end{equation}
\begin{equation}
 \begin{tabular}{cc}
    $\nabla\cdot E=4\pi\rho$, & $\nabla\cdot B=0$,
 \end{tabular}
\label{Maxwell Constraints}
\end{equation}
where $v:=p\left(1+|p|^2\right)^{-1/2}$ denotes the relativistic velocity. The coupling of the Vlasov (\ref{Vlasov Equation}) and Maxwell equations (\ref{Maxwell Evolution 1})-(\ref{Maxwell Constraints}) is through the charge and current densities, which we denoted by $\rho=\rho(t,x)$ and $j=j(t,x)$ respectively. They are defined by 
\begin{equation}
 \begin{tabular}{cc}
  $\rho:=\int_{\mathbb{R}^3}fdp$, & $j:=\int_{\mathbb{R}^3}vfdp$.
 \end{tabular}
\label{Density and Current}
\end{equation}
We define the Cauchy problem for the RVM system by (\ref{Vlasov Equation})-(\ref{Density and Current}) with initial data
\begin{equation}
 \begin{tabular}{ccc}
   $f_{|t=0}=f_0$, & $E_{|t=0}=E_0$, & $B_{|t=0}=B_0$,
 \end{tabular}
\label{Initial Data} 
\end{equation}
satisfying (\ref{Maxwell Constraints}) in the sense of distribution. It is not difficult to check that if (\ref{Maxwell Constraints}) holds at $t=0$, then it will do so for all time in which the solution exist. Thus, the equations (\ref{Maxwell Constraints}) can be understood as a mere constraint on the initial data.  

Now, define  
\begin{displaymath}
 L^1_{kin}(\mathbb{R}^6):=\left\{g\in L^1(\mathbb{R}^6):g\geq0, \int\int\sqrt{1+\left|p\right|^2}g(x,p)dxdp<\infty\right\}.
\end{displaymath}
For $T>0$, we say that $(f,\;E,\;B)$ is a weak solution of the RVM system if
\begin{equation}
\label{Space Weak Solution}
\begin{tabular}{cc}
$f\in L^{\infty}([0,T[;L^1_{kin}\cap L^{\infty}(\mathbb{R}^6))$, & $E, B\in \left[L^{\infty}([0,T[;L^2({\mathbb{R}^3}))\right]^3$
\end{tabular}
\end{equation}
and the equations (\ref{Vlasov Equation})-(\ref{Maxwell Constraints}) are satisfied in the sense of distributions. In particular, we say that the Vlasov equation (\ref{Vlasov Equation}) is satisfied in the sense of distributions if for all $\varphi\in C^{\infty}([0,T]\times\mathbb{R}^6)$ with compact support in $[0,T[\times\mathbb{R}^6$
\begin{eqnarray}
\label{Vlasov Equation Weak Form}
\int^T_0\int_{\mathbb{R}^3\times\mathbb{R}^3}f(t,x,p)\left[\partial_t\varphi+v\cdot\nabla_x\varphi+K\cdot\nabla_p\varphi\right](t,x,p)dtdxdp\nonumber\\
=-\int_{\mathbb{R}^3\times\mathbb{R}^3}f_0(x,p)\varphi(0,x,p))dxdp.
\end{eqnarray}
We define analogous relations for the Maxwell equations (\ref{Maxwell Evolution 1})-(\ref{Maxwell Constraints}) as well. The vector field $K:=E+v\times B$ in (\ref{Vlasov Equation Weak Form}) denotes the Lorentz force acting on a reference particle of velocity $v$. Notice that it satisfies $\nabla_p\cdot K\equiv0$.

The global existence result for weak solutions in both relativistic and non-relativistic settings is due to DiPerna and Lions and can be found in \cite{DiPernaLions}. In \cite{Rein} this result is revisited. The uniqueness problem, on the other hand, remains unsolved. It is also unknown whether weak solutions preserve the total energy at least almost everywhere in time, cf. \cite[Remark 4, p.740]{DiPernaLions}. It is known, however, that the energy is bounded at almost all $t$ by its value at $t=0$, namely
\begin{eqnarray}
\label{Bound Total Energy}
\mathcal{E}(t)&:=&\int_{\mathbb{R}^3\times\mathbb{R}^3}\sqrt{1+\left|p\right|^2}f(t,x,p)dxdp\nonumber\\
& & +\frac{1}{8\pi}\int_{\mathbb{R}^3}\left|E(t,x)\right|^2+\left|B(t,x)\right|^2dx\hspace{.2cm}\leq\hspace{.2cm}\mathcal{E}(0).
\end{eqnarray}

In the present note we show that if the electric and magnetic fields $E$ and $B$ are locally of bounded variation and, for any $\lambda>0$, the function  
\begin{equation}
\label{Rho Lambda}
\rho_{\lambda}(t,x):=\int_{\mathbb{R}^3}\left|p\right|^{\lambda}f(t,x,p)dp
\end{equation}
is square integrable, then the relation (\ref{Bound Total Energy}) is in fact an equality for almost all $0\leq t<T$. Precisely, we prove the following result:
\begin{theorem} 
\label{The Theorem}
Let $\lambda>0$. Let $f_0\in L^1_{kin}\cap L^{\infty}(\mathbb{R}^6)$ , $E_0,B_0\in \left[L^2({\mathbb{R}^3})\right]^3$ and denote by $(f,E,B)$ a weak solution of the RVM system satisfying $\left.(f,E,B)\right|_{t=0}=(f_0,E_0,B_0)$. If $E,B\in [L^1_{loc}(]0,T[;BV_{loc}(\mathbb{R}^3))]^3$ and $\rho_{\lambda}$ as defined in (\ref{Rho Lambda}) is in $L^{\infty}_{loc}(]0,T[;L^2(\mathbb{R}^3))$, then the total energy defined by (\ref{Bound Total Energy}) satisfies $\mathcal{E}(t)=\mathcal{E}(0)$ for almost all $0\leq t<T$.
\end{theorem}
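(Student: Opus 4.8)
The plan is to derive two distributional-in-time energy identities, one for the kinetic part and one for the field part, whose coupling terms $\int_{\mathbb{R}^3}E\cdot j\,dx$ cancel, and then to conclude that $\tfrac{d}{dt}\mathcal{E}(t)=0$ in $\mathcal{D}'(]0,T[)$. For the kinetic part I would test the weak formulation of the Vlasov equation against $\varphi_{R,M}(t,x,p)=\psi(t)\chi_M(x)\chi_R(p)\sqrt{1+|p|^2}$, where $\psi\in C_c^\infty([0,T[)$ and $\chi_M,\chi_R$ are standard cut-offs equal to one on balls of radius $M,R$ and supported in balls of radius $2M,2R$. Since $\nabla_p\sqrt{1+|p|^2}=v$ and $(v\times B)\cdot v=0$, the principal force contribution collapses to $\int\int f\,\chi_R\,E\cdot v\,dp\,dx$, which converges to $\int E\cdot j\,dx$, while the spatial transport term only carries the factor $v\cdot\nabla_x\chi_M$ and drops out. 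After the limits $M,R\to\infty$ this produces
\begin{equation*}
\int_0^T\psi'(t)\Big(\int\int\sqrt{1+|p|^2}\,f\,dp\,dx\Big)dt+\int_0^T\psi(t)\Big(\int E\cdot j\,dx\Big)dt=-\psi(0)\int\int\sqrt{1+|p|^2}\,f_0\,dp\,dx,
\end{equation*}
i.e. $\tfrac{d}{dt}\mathcal{E}_{\mathrm{kin}}(t)=\int E\cdot j\,dx$ with the correct initial value.

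Next I would record that the hypotheses force $\rho$ and $j$ into $L^\infty_{loc}(]0,T[;L^2(\mathbb{R}^3))$. Splitting the momentum integral at $|p|=1$, the piece over $|p|\le1$ is bounded pointwise by $\|f\|_{L^\infty}|B_1|$ and has finite total mass, so it lies in $L^1\cap L^\infty\subset L^2$ in $x$; the piece over $|p|>1$ is dominated by $\int_{|p|>1}|p|^\lambda f\,dp\le\rho_\lambda$, which is square integrable by assumption. Hence $\rho\in L^2$, and since $|v|<1$ also $|j|\le\rho\in L^2$. This is exactly what renders the coupling $\int E\cdot j\,dx$ well defined for $E\in L^2$, and it is the only place the moment hypothesis enters the \emph{main} terms.

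The crux, and the step I expect to be most delicate, is showing that the two cut-off errors vanish. The spatial error is controlled by the tail $\int_{|x|\ge M}\int\sqrt{1+|p|^2}f\,dp\,dx\to0$ of the finite kinetic energy. The momentum error is
\begin{equation*}
\int_0^T\psi\int\int f\,\sqrt{1+|p|^2}\;K\cdot\nabla_p\chi_R\,dp\,dx\,dt,
\end{equation*}
and here the weight $\sqrt{1+|p|^2}\sim R$ on the shell $R\le|p|\le2R$ exactly compensates the factor $|\nabla_p\chi_R|\lesssim R^{-1}$, leaving $\lesssim\int_0^T\psi\int(|E|+|B|)\int_{R\le|p|\le2R}f\,dp\,dx\,dt$. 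Bounding the inner integral by $R^{-\lambda}\rho_\lambda$ and applying Cauchy--Schwarz against $E,B\in L^2$ yields an estimate of order $R^{-\lambda}\to0$. This is the precise mechanism by which the square-integrable $\lambda$-moment tames the relativistic weight at large momenta.

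Finally, for the field energy I would mollify the Maxwell evolution equations in $x$; because they are linear with constant coefficients, mollification commutes with $\nabla\times$ and produces \emph{no} commutator. The smoothed fields obey a classical energy identity in which the curl terms integrate to zero through $\nabla\cdot(B_\epsilon\times E_\epsilon)$, leaving $\tfrac{d}{dt}\tfrac{1}{8\pi}\int(|E_\epsilon|^2+|B_\epsilon|^2)\,dx=-\int E_\epsilon\cdot j_\epsilon\,dx$; letting $\epsilon\to0$ with $j\in L^2$ gives $\tfrac{d}{dt}\mathcal{E}_{\mathrm{field}}(t)=-\int E\cdot j\,dx$ distributionally. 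Adding the two identities cancels the flux $\int E\cdot j\,dx$, so $\tfrac{d}{dt}\mathcal{E}=0$; being bounded by the a priori energy estimate already available, $\mathcal{E}$ is then a.e. constant and equal to $\mathcal{E}(0)$. I expect the local $BV$ hypothesis on $E,B$ to be the natural device for putting the kinetic step on a fully rigorous footing: it is exactly the regularity under which the DiPerna--Lions--Ambrosio commutator estimates guarantee that $f$ is a renormalized solution, so that the manipulations used when testing against the unbounded weight $\sqrt{1+|p|^2}$ are genuinely justified rather than merely formal.
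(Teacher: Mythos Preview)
Your proposal is correct and, for the Maxwell half, essentially identical to the paper's Lemma~\ref{Lemma 1}. The kinetic half, however, takes a genuinely different route. The paper first mollifies $f$ in $(x,p)$ to obtain $f^n=\kappa_{\epsilon_1^n}\kappa_{\epsilon_2^n}\ast f$ satisfying a pointwise equation $\partial_t f^n+v\cdot\nabla_x f^n+K\cdot\nabla_p f^n=r^n$, and spends its main technical effort (Lemma~\ref{Lemma Bouchut}, a Bouchut-type commutator estimate) on showing that $r^n\to 0$ in $L^1(]0,T[;L^1_{loc})$; this is exactly where the $BV_{loc}$ hypothesis on $E,B$ is consumed. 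Only afterwards does it multiply by $\sqrt{1+|p|^2}\phi_R$, integrate, send $n\to\infty$, and finally $R\to\infty$, controlling the $p$-tail via $\rho_\lambda$ just as you do. You instead insert the admissible test function $\psi(t)\chi_M(x)\chi_R(p)\sqrt{1+|p|^2}\in C^\infty_c([0,T[\times\mathbb{R}^6)$ directly into the weak formulation~(\ref{Vlasov Equation Weak Form}); no commutator ever arises, and the only limits to justify are the two cut-off tails, handled exactly as in the paper (the finite kinetic energy for the $x$-tail, and the $\rho_\lambda$ moment paired with $E,B\in L^2$ for the $p$-tail).

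Your closing paragraph is therefore overly cautious. The weight $\sqrt{1+|p|^2}$ is never ``unbounded'' in your argument, since it always carries the compactly supported factor $\chi_R(p)$; the test function is genuinely admissible and no renormalization machinery is needed to legitimize the computation. In effect, your argument never touches the $BV_{loc}$ hypothesis at all, which indicates that the paper's detour through the DiPerna--Lions--Bouchut commutator lemma is more than the stated theorem strictly requires. What the paper's route buys is a stronger intermediate statement (a pointwise-a.e.\ Vlasov equation for the regularized $f^n$), but for the energy identity itself your direct testing is both shorter and hypothesis-lighter.
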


The tools we use are basically those introduced by DiPerna and Lions in \cite{Duality} to study renormalized solutions of transport equations. We shall also refer to \cite{Bouchut}, where applications to the Vlasov equation are given. We remark that the same result holds for the electromagnetic field in $[L^1_{loc}(]0,T[;W^{1,1}_{loc}(\mathbb{R}^3))]^3$ since we have the (strict) inclusion $W^{1,1}(\Omega)\subset BV(\Omega)$ for any open set $\Omega\subseteq\mathbb{R}^3$. For a detailed account on functions of bounded variation cf. \cite{Ambrosio}. We would like to include here the reference \cite{Loeper}, where the uniqueness of weak solutions for the Vlasov-Poisson system has been obtained under the sole assumption that the \textit{spatial} density is bounded. Similar results would be desirable for the more demanding Vlasov-Maxwell system.    

Formally, the law of the conservation of the total energy is derived as follows. Multiply the Maxwell equations (\ref{Maxwell Evolution 1}) and (\ref{Maxwell Evolution 2}) by $E$ and $B$ respectively and integrate on $\mathbb{R}^3$ to find that
\begin{equation}
\label{Formal Energy Maxwell}
\frac{1}{8\pi}\frac{d}{dt}\int_{\mathbb{R}^3}\left|E\right|^2+\left|B\right|^2dx=-\int_{\mathbb{R}^3}j\cdot Edx.
\end{equation}
Multiply the Vlasov equation by $\sqrt{1+\left|p\right|^2}$ and integrate on $\mathbb{R}^3\times\mathbb{R}^3$ to get
\begin{equation}
\label{Formal Energy Vlasov}
\frac{d}{dt}\int_{\mathbb{R}^3\times\mathbb{R}^3}\sqrt{1+\left|p\right|^2}fdxdp=\int_{\mathbb{R}^3}j\cdot Edx.
\end{equation}
Then the sum of (\ref{Formal Energy Maxwell}) and (\ref{Formal Energy Vlasov}) provide the desired result.

As for weak solutions, we shall follow the same scheme. We find relations analogous to (\ref{Formal Energy Maxwell}) and (\ref{Formal Energy Vlasov}) in sections \ref{Section Maxwell} and \ref{Section Vlasov} respectively. The difficulty is to overcome the lack of regularity and the need of justifying the operations taken for granted when the solutions are smooth.

\section{Energy balance for the Maxwell equation}
\label{Section Maxwell}

Here we show that if the current $j$ is square integrable for almost all time, then the weak solution of the RVM system satisfies the energy balance associated to the Maxwell equations, i.e., the relation (\ref{Formal Energy Maxwell}). This result is reminiscent of the duality theorem for transport equations given by DiPerna and Lions in \cite{Duality}.

\begin{lemma} 
\label{Lemma 1}
Let $(f,E,B)$ be a weak solution of the RVM system with initial data $(f_0,E_0,B_0)$. If $j$ as defined in (\ref{Density and Current}) is in $[L^{\infty}(]0,\;T[;L^2(\mathbb{R}^3))]^3$, then 
\begin{equation}
\frac{1}{8\pi}(\left\|E(t)\right\|^2_{L^2_x} + \left\|B(t)\right\|^2_{L^2_x}) + \int^t_0\int_{\mathbb{R}^3}j\cdot Edsdx=\frac{1}{8\pi}(\left\|E_0\right\|^2_{L^2_x} + \left\|B_0\right\|^2_{L^2_x})
\label{Poynting}
\end{equation}
for almost all $t\in[0,T[$.
\end{lemma}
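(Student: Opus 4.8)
The plan is to establish (\ref{Poynting}) by spatial regularization, exploiting the fact that the Maxwell operator has constant coefficients, so that mollification commutes \emph{exactly} with the equations and no DiPerna--Lions commutator term arises. Fix a standard mollifier $(\chi_\varepsilon)_{\varepsilon>0}$ on $\mathbb{R}^3$ and convolve (\ref{Maxwell Evolution 1})--(\ref{Maxwell Evolution 2}) in the space variable, setting $E_\varepsilon:=E\ast\chi_\varepsilon$, $B_\varepsilon:=B\ast\chi_\varepsilon$ and $j_\varepsilon:=j\ast\chi_\varepsilon$. Since $\partial_t$ and $\nabla\times$ have constant coefficients, convolution commutes with them, so for each $\varepsilon$ the mollified fields solve, in the sense of distributions,
\[
\partial_t E_\varepsilon-\nabla\times B_\varepsilon=-4\pi j_\varepsilon,\qquad \partial_t B_\varepsilon+\nabla\times E_\varepsilon=0,
\]
now with $E_\varepsilon,B_\varepsilon,j_\varepsilon\in L^\infty(]0,T[;L^2)$ \emph{smooth in $x$}, all spatial derivatives again lying in $L^\infty(]0,T[;L^2)$ (by Young's inequality $\|g\ast\partial\chi_\varepsilon\|_{L^2}\le\|g\|_{L^2}\|\partial\chi_\varepsilon\|_{L^1}$). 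In particular $\nabla\times B_\varepsilon$ and $\nabla\times E_\varepsilon$ lie in $L^\infty(]0,T[;L^2)$, so the equations show $\partial_t E_\varepsilon,\partial_t B_\varepsilon\in L^\infty(]0,T[;L^2)$. Hence $E_\varepsilon,B_\varepsilon\in W^{1,\infty}(]0,T[;L^2)$ and, after modification on a null set, admit representatives that are Lipschitz continuous from $[0,T[$ into $L^2$; the way the initial data enter the weak formulation then forces $E_\varepsilon(0)=E_0\ast\chi_\varepsilon$ and $B_\varepsilon(0)=B_0\ast\chi_\varepsilon$.

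With this regularity the chain rule $\tfrac{d}{dt}\tfrac12\|E_\varepsilon(t)\|^2_{L^2_x}=\int_{\mathbb{R}^3}\partial_t E_\varepsilon\cdot E_\varepsilon\,dx$ is justified (and likewise for $B_\varepsilon$). Adding the two identities and substituting the equations, the curl contributions combine into $\int(\nabla\times B_\varepsilon)\cdot E_\varepsilon-\int(\nabla\times E_\varepsilon)\cdot B_\varepsilon$, which vanishes by the skew-symmetry of the curl — the integration by parts being legitimate because $E_\varepsilon,B_\varepsilon$ and their curls all lie in $L^2(\mathbb{R}^3)$. There remains
\[
\frac{1}{8\pi}\frac{d}{dt}\big(\|E_\varepsilon(t)\|^2_{L^2_x}+\|B_\varepsilon(t)\|^2_{L^2_x}\big)=-\int_{\mathbb{R}^3}j_\varepsilon\cdot E_\varepsilon\,dx,
\]
and, since the left side is absolutely continuous in $t$, integrating from $0$ to $t$ yields the exact analogue of (\ref{Poynting}) for the mollified quantities, valid for every $t\in[0,T[$.

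It then remains to let $\varepsilon\to0$. For a.e. $t$ one has $E(t),B(t),j(t)\in L^2$, whence $E_\varepsilon(t)\to E(t)$, $B_\varepsilon(t)\to B(t)$ and $j_\varepsilon(t)\to j(t)$ in $L^2$ by the standard mollification property, so the boundary norms converge for a.e. $t$; at $t=0$ one has $E_\varepsilon(0)\to E_0$ and $B_\varepsilon(0)\to B_0$ in $L^2$. For the source term, $\int_{\mathbb{R}^3}j_\varepsilon(s)\cdot E_\varepsilon(s)\,dx\to\int_{\mathbb{R}^3}j(s)\cdot E(s)\,dx$ for a.e. $s$, and since $\|j_\varepsilon(s)\|_{L^2}\le\|j(s)\|_{L^2}$ and $\|E_\varepsilon(s)\|_{L^2}\le\|E(s)\|_{L^2}$ the integrand is dominated by $\|j\|_{L^\infty_tL^2}\|E\|_{L^\infty_tL^2}\in L^1(]0,t[)$, so dominated convergence gives $\int_0^t\!\int j_\varepsilon\cdot E_\varepsilon\to\int_0^t\!\int j\cdot E$. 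Passing to the limit in the mollified identity proves (\ref{Poynting}) for a.e. $t\in[0,T[$.

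The main obstacle is the first step: securing enough \emph{time} regularity to run the chain rule. The distributional equation only gives $\partial_t E\in L^\infty(]0,T[;H^{-1})$, which cannot be paired with $E\in L^2$; spatial mollification is precisely what upgrades $\partial_t E_\varepsilon$ to $L^\infty(]0,T[;L^2)$ and renders $t\mapsto\|E_\varepsilon(t)\|^2_{L^2_x}$ absolutely continuous. One must also verify that the initial data are correctly attained by the continuous-in-time representatives, so that the limit reproduces the right-hand side of (\ref{Poynting}). The constant-coefficient structure is what keeps this step clean: unlike the Vlasov equation treated in Section~\ref{Section Vlasov}, there is no commutator error to control, which is why the $BV$ hypothesis on $E,B$ plays no role here and only $j\in L^\infty(]0,T[;L^2)$ is required.
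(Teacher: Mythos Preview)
Your argument is correct and follows the same overall plan as the paper---spatial mollification, chain rule for the mollified fields, then $\varepsilon\to0$ via dominated convergence---but with one genuine simplification. The paper introduces a spatial cut-off $\phi_R$, multiplies the mollified equations by $E_\varepsilon\phi_R$ and $B_\varepsilon\phi_R$, and after integrating by parts is left with a Poynting-flux term $\tfrac{1}{4\pi}\int_0^t\!\int (E\times B)\cdot\nabla\phi_R$, which it then shows is $O(1/R)$ before sending $R\to\infty$. You bypass the cut-off entirely by observing that after mollification $E_\varepsilon,B_\varepsilon$ and their curls all lie in $L^2(\mathbb{R}^3)$, so the identity $\int(\nabla\times B_\varepsilon)\cdot E_\varepsilon=\int B_\varepsilon\cdot(\nabla\times E_\varepsilon)$ holds directly (by density of $C^\infty_c$ in $H(\mathrm{curl},\mathbb{R}^3)$). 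This is cleaner and makes the $R\to\infty$ step unnecessary; the paper's version is slightly more elementary in that it avoids invoking that density fact and handles the ``boundary at infinity'' explicitly. Both routes use the $j\in L^\infty_t L^2_x$ hypothesis in the same place, namely to pair $j$ with $E$ and to dominate $\int j_\varepsilon\cdot E_\varepsilon$ in the $\varepsilon\to0$ limit.
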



\begin{proof}
Let $\epsilon>0$ and let $\kappa\in C^{\infty}_0(\mathbb{R}^3)$, $\kappa$ even, be a standard mollifier. Define the regularization kernel $\kappa_{\epsilon}:=\frac{1}{\epsilon^3}\kappa(\frac{x}{\epsilon})$. Since mollification and distributional differentiation commute, i.e., $\left(\partial_x u\right)\ast\kappa_{\epsilon}=\partial_x\left(u\ast\kappa_{\epsilon}\right)$, we can convolute (\ref{Maxwell Evolution 1}) and (\ref{Maxwell Evolution 2}) with $\kappa_{\epsilon}$ to obtain 

\begin{eqnarray}
\label{Modified Maxwell Equation 1}
\frac{\partial E_{\epsilon}}{\partial t}-\nabla\times B_{\epsilon} & = & -4\pi j_{\epsilon}\\
\label{Modified Maxwell Equation 2}
\frac{\partial B_{\epsilon}}{\partial t}+\nabla\times E_{\epsilon} & = & 0,
\end{eqnarray}
where $j_{\epsilon}:=j\ast\kappa_{\epsilon}$ and $(E_{\epsilon},B_{\epsilon}):=(E,B)\ast\kappa_{\epsilon}$. 

Consider the family of smooth cut-off functions $\phi_R:=\phi(\frac{\cdot}{R})$, $R\geq1$ where $\phi\in C^{\infty}_0(\mathbb{R}^3)$, $\phi\geq0$ and $\phi\equiv1$ on the ball $B_1$ $\subset$ supp$\phi$ $\subset B_2$. The smoothness of the fields $B_{\epsilon}$ and $E_{\epsilon}$ with respect to $x$ imply via (\ref{Modified Maxwell Equation 1}) and (\ref{Modified Maxwell Equation 2}) that $\partial_t E_{\epsilon},\partial_t B_{\epsilon}\in L^1_{loc}\left[(]0,T[\times\mathbb{R}^3)\right]^3$. Thus, $E_{\epsilon},B_{\epsilon}\in \left[W^{1,1}_{loc}(]0,T[\times\mathbb{R}^3)\right]^3$ and we can apply the chain rule in Sobolev spaces, i.e., for almost all $t\in]0,T[$ 

\begin{displaymath}
\frac{1}{2}\frac{\partial}{\partial t}\left(\left|E_{\epsilon}(t)\right|^2+\left|B_{\epsilon}(t)\right|^2\right)= E_{\epsilon}\cdot\frac{\partial E_{\epsilon}}{\partial t}+B_{\epsilon}\cdot\frac{\partial B_{\epsilon}}{\partial t}.
\end{displaymath}  

Therefore, we can multiply (\ref{Modified Maxwell Equation 1}) and (\ref{Modified Maxwell Equation 2}) by $E_{\epsilon}\phi_R$ and $B_{\epsilon}\phi_R$ respectively, sum the resultant equations and integrate by parts to find that

\begin{eqnarray}
\frac{1}{8\pi}\int_{\mathbb{R}^3}\left(\left|E_{\epsilon}(t)\right|^2+\left|B_{\epsilon}(t)\right|^2\right)\phi_R - \frac{1}{8\pi}\int_{\mathbb{R}^3}\left(\left|E_{\epsilon}(0)\right|^2+\left|B_{\epsilon}(0)\right|^2\right)\phi_R \nonumber\\
=\frac{1}{4\pi}\int^t_0\int_{\mathbb{R}^3}\left(E_{\epsilon}\times B_{\epsilon}\right)\cdot\nabla\phi_R-\int^t_0\int_{\mathbb{R}^3}j_{\epsilon}\cdot E_{\epsilon}\phi_R.
\end{eqnarray}  

Let $\epsilon\rightarrow0$. The terms on the left side converge as a consequence of the theorem of smooth approximations \cite[Theorem 3, p.196]{McOwen}. Also, the same theorem and the assumption made on the current $j$ easily implies that $\int j_{\epsilon}\cdot E_{\epsilon}\rightarrow \int j\cdot E$ for almost all $s\in[0,T[$. Thus, we may invoke the Lebesgue dominated convergence theorem and the convergence of the second term in the right side follows as well. Clearly, the same reasoning applies to the remaining term. Then, for almost all $t\in[0,T[$

\begin{eqnarray}
\label{Sequence Poynting}
\frac{1}{8\pi}\int_{\mathbb{R}^3}\left(\left|E(t)\right|^2+\left|B(t)\right|^2\right)\phi_R - \frac{1}{8\pi}\int_{\mathbb{R}^3}\left(\left|E(0)\right|^2+\left|B(0)\right|^2\right)\phi_R\nonumber\\ 
=\frac{1}{4\pi}\int^t_0\int_{\mathbb{R}^3}\left(E\times B\right)\cdot\nabla\phi_R-\int^t_0\int_{\mathbb{R}^3}j\cdot E\phi_R.
\end{eqnarray}

Finally, since for some constant $C_T$ that does not depend on $R$ \begin{displaymath}
\left|\int^t_0\int_{\mathbb{R}^3}\left(E\times B\right)\cdot\nabla\phi_R\right|\leq \frac{C_T}{R}\left\|E\right\|_{L^{\infty,2}_{t,x}}\left\|B\right\|_{L^{\infty,2}_{t,x}},
\end{displaymath}  
it is easy to check that (\ref{Poynting}) follows from (\ref{Sequence Poynting}) by letting $R\rightarrow\infty$. The proof of the lemma is complete.
\end{proof}

\section{Energy Balance for the Vlasov Equation}
\label{Section Vlasov}

In this section we deduce the duality formula \cite{Duality} resulting from the Vlasov equation (\ref{Vlasov Equation}) and (the identity) $K\cdot\nabla_p\sqrt{1+\left|p\right|^2}\equiv v\cdot E$, which gives the energy balance associated to the Vlasov equation. Since we now face a nonlinear term in (\ref{Vlasov Equation}), we need to first prove the following lemma, a particular case of Lemma 3.5 in \cite{Bouchut}.

 \begin{lemma}
 \label{Lemma Bouchut}
  Let $\kappa_{\epsilon_1}$ and $\kappa_{\epsilon_2}$ be two regularization kernels defined on $\mathbb{R}^3_x$ and $\mathbb{R}^3_p$ respectively. Let $(f,\;E,\;B)$ be a weak solution of the RVM system. If $E,B\in \left[L^1(]0,T[;BV_{loc}(\mathbb{R}^3))\right]^3$, then there exist two sequences $\epsilon^n_1>0$, $\epsilon^n_2>0$, $\epsilon^n_1\rightarrow0$, $\epsilon^n_2\rightarrow0$ such that  
   \begin{eqnarray}
    \label{Commutator}
    \nabla_x\cdot\left[v(\kappa_{\epsilon^n_1}\kappa_{\epsilon^n_2}\ast f)\right]+\nabla_p\cdot\left[K(\kappa_{\epsilon^n_1}\kappa_{\epsilon^n_2}\ast f)\right]\nonumber\hspace{3.cm}\\
-(\nabla_x\cdot\left[vf\right])\ast\kappa_{\epsilon^n_1}\kappa_{\epsilon^n_2}-(\nabla_p\cdot\left[Kf\right])\ast\kappa_{\epsilon^n_1}\kappa_{\epsilon^n_2}
  \end{eqnarray}
converges to $0$ in $L^1(]0,T[;L^1_{loc}(\mathbb{R}^3\times\mathbb{R}^3))$.
 \end{lemma}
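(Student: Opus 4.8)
The plan is to reduce the expression (\ref{Commutator}) to the Di\,Perna--Lions double-difference form and then exploit the special ``kinetic'' structure of the phase-space transport field $b:=(v,K)$ on $\mathbb{R}^6_{x,p}$: namely that $\nabla_x\cdot v=0$ and $\nabla_p\cdot K=0$, that $v=v(p)$ is smooth with bounded derivatives, and that $K=E+v\times B$ is rough (only $BV_{loc}$) solely in the $x$ variable. Writing $z=(x,p)$, $z'=(x',p')$ and $\kappa^{n}:=\kappa_{\epsilon_1^{n}}\otimes\kappa_{\epsilon_2^{n}}$, I would first use the two divergence-free identities to turn (\ref{Commutator}) into $b\cdot\nabla(\kappa^{n}\ast f)-(b\cdot\nabla f)\ast\kappa^{n}$, and then integrate by parts in the convolution to obtain
\[
R^{n}(z)=\int f(z')\,\bigl[b(z)-b(z')\bigr]\cdot(\nabla\kappa^{n})(z-z')\,dz'.
\]
Splitting $b(z)-b(z')=(v(p)-v(p'),\,K(x,p)-K(x',p'))$ gives $R^{n}=V^{n}+F^{n}$, where $V^{n}$ pairs $\nabla_x\kappa_{\epsilon_1^{n}}$ with $v(p)-v(p')$ and $F^{n}$ pairs $\nabla_p\kappa_{\epsilon_2^{n}}$ with the force increment, which I would decompose as $K(x,p)-K(x',p')=[E(x)-E(x')]+[v(p)-v(p')]\times B(x)+v(p')\times[B(x)-B(x')]$.

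The velocity term $V^{n}$ and the smooth force term $[v(p)-v(p')]\times B(x)$ involve only the Lipschitz map $v$, so they are treated exactly as in the classical commutator lemma of \cite{Duality} (see also \cite{Bouchut}). After extracting the mean-zero part of the kernel, each is bounded for a.e.\ $t$ by a constant times $\tfrac{\epsilon_2^{n}}{\epsilon_1^{n}}\,\omega_p(\epsilon_2^{n};f(t))$ plus a remainder $O((\epsilon_2^{n})^{2}/\epsilon_1^{n})$, where $\omega_p(\delta;g):=\sup_{|w|\le\delta}\|g(\cdot,\cdot+w)-g\|_{L^{1}_{x,p}}$ is the $p$-translation modulus. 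Since $f(t)\in L^{1}$, continuity of translation gives $\omega_p(\delta;f(t))\to0$ as $\delta\to0$; hence these contributions vanish in $L^{1}_{loc}$ as soon as the ratio $\epsilon_2^{n}/\epsilon_1^{n}$ stays bounded (the locally integrable multiplier $B(x)$ in the second term causes no difficulty).

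The genuinely new piece, and the main obstacle, is the $BV$ part $[E(x)-E(x')]$ (and identically $v(p')\times[B(x)-B(x')]$): the increment is rough in $x$ yet is paired with the $p$-kernel $\nabla_p\kappa_{\epsilon_2^{n}}$. The key is that this increment is independent of $p'$, so the mean-zero identity $\int\nabla_p\kappa_{\epsilon_2^{n}}=0$ annihilates the frozen value $f(x',p)$ and leaves only the $p$-variation of $f$; combining the $BV$ estimate $\int|E(x)-E(x')|\,\kappa_{\epsilon_1^{n}}(x-x')\,dx\lesssim \epsilon_1^{n}\,\overline{|D_xE|}(x')$ with $\int|\nabla_p\kappa_{\epsilon_2^{n}}|\lesssim 1/\epsilon_2^{n}$ yields, for a.e.\ $t$, a bound of the form $\tfrac{\epsilon_1^{n}}{\epsilon_2^{n}}\,\omega_p(\epsilon_2^{n};f(t))\,|D_xE(t)|_{loc}$, which again tends to $0$ provided $\epsilon_1^{n}/\epsilon_2^{n}$ stays bounded. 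The structural reason this works is precisely that the only low regularity lives in $x$ while the differentiation forced by the commutator acts in $p$, so no singular part of $D_xE,D_xB$ survives in the limit; this is exactly why one mollifies $x$ and $p$ on two separate scales and then takes them comparable.

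Finally I would take $\epsilon_1^{n}\simeq\epsilon_2^{n}\to0$, so that both ratios above remain bounded, collect the a.e.-in-$t$ spatial estimates, and integrate in time. Each bound is dominated by $C\,\|f\|_{L^{\infty}_tL^{1}}\,(|D_xE(t)|_{loc}+|D_xB(t)|_{loc})$, which lies in $L^{1}(]0,T[)$ because $E,B\in L^{1}(]0,T[;BV_{loc})$, and tends to $0$ pointwise in $t$; dominated convergence then gives convergence to $0$ in $L^{1}(]0,T[;L^{1}_{loc})$, and a diagonal extraction produces the claimed sequences $\epsilon_1^{n},\epsilon_2^{n}$. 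Thus the crux is to upgrade the naive $O(\epsilon_1^{n}/\epsilon_2^{n})$ size of the $BV$ term into a genuinely vanishing quantity, which is achieved by simultaneously invoking the mean-zero cancellation of $\nabla_p\kappa_{\epsilon_2^{n}}$ and the $L^{1}$ translation continuity of $f$ in $p$.
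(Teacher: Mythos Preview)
Your reduction to the double-difference form and your splitting of the Lorentz increment are exactly those of the paper, and your treatment of the velocity term $V^n$ and of the ``smooth'' force piece $[v(p)-v(p')]\times B(x)$ is fine: with $\epsilon_1^n\simeq\epsilon_2^n$ these vanish because the $L^2$ (or $L^1$) translation modulus of $f$ tends to zero. The gap is in the $BV$ piece. Your claimed bound $\tfrac{\epsilon_1^n}{\epsilon_2^n}\,\omega_p(\epsilon_2^n;f(t))\,|D_xE(t)|_{loc}$ is not justified by the argument you give. The only translation estimate available for $E\in BV_{loc}$ is in $L^1_x$, namely $\|E(\cdot)-E(\cdot-y)\|_{L^1(B_R)}\le |y|\,|D_xE|(B_{R+1})$; pairing this with the $f$-increment via H\"older forces the latter into $L^\infty_{x}$, and the $L^\infty$ translation modulus of $f$ does \emph{not} vanish (take $f=\mathbf 1_{\{x_1>\phi(p)\}}$ for continuous $\phi$). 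The pointwise inequality you invoke, $\int|E(x)-E(x')|\kappa_{\epsilon_1}(x-x')\,dx\lesssim\epsilon_1\,\overline{|D_xE|}(x')$, would require $\overline{|D_xE|}$ to be the maximal function of the measure $|D_xE|$, which is only weak-$L^1$, so it cannot be paired with an $L^1$ $p$-modulus either. In short, with a bounded ratio $\epsilon_1/\epsilon_2$ the $BV$ commutator does \emph{not} vanish from a modulus argument alone.

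The paper (following Bouchut) resolves this differently: it accepts the crude bound $\|I^{K,x}\|_{L^1(\Omega)}\le C_R\,\tfrac{\epsilon_1}{\epsilon_2}\,\bigl(\|D_xE\|_{\mathcal M}+\|D_xB\|_{\mathcal M}\bigr)\,\|f\|_{L^\infty}$, with \emph{no} modulus, and forces it to zero by taking $\epsilon_1^n/\epsilon_2^n=1/n\to 0$. The price is that the velocity term now carries the blowing-up factor $\epsilon_2^n/\epsilon_1^n=n$; this is absorbed by a diagonal choice of the sequences so that the $L^2$ modulus $\sup_{|y|\le\epsilon_1^n,|q|\le\epsilon_2^n}\|f-f(\cdot-y,\cdot-q)\|_{L^2(\Omega)}$ is $\le 1/n^{2}$, whence $\|I^v\|\le C_R/n\to 0$. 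So the crucial idea you are missing is precisely that the two scales must be taken with $\epsilon_1^n/\epsilon_2^n\to 0$ (not comparable), the $BV$ term being killed by the ratio and the remaining terms by the modulus via a diagonal extraction; the $f\in L^\infty$ hypothesis is what makes the $BV$ estimate usable without any continuity of translation.
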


 \begin{proof}
First we omit the dependence in time and show the corresponding convergence on $\mathbb{R}^3\times\mathbb{R}^3$. Then we study the convergence on time as well.  
  
Indeed, the compact support of the mollifiers and the divergence theorem allow us to rewrite (\ref{Commutator}) as
\begin{eqnarray}
I^v(x,p) + I^K(x,p)\hspace{9.5cm}\nonumber\\
:=\int\int[f(x,p)-f(x-y,p-q)][v(p)-v(p-q)]\cdot\nabla_y\kappa_{\epsilon_1}(y)\kappa_{\epsilon_2}(q)dydq\hspace{1.5cm}\nonumber\\ 
+\int\int[f(x,p)-f(x-y,p-q)][K(x,p)-K(x-y,p-q)]\cdot\nabla_q\kappa_{\epsilon_2}(q)\kappa_{\epsilon_1}(y)dydq.\hspace{-.1cm}\nonumber
\end{eqnarray}
In addition, since we have that 
\begin{eqnarray}
\label{Lorentz Difference}
 K(x,p)-K(x-y,p-q) & = & E(x)-E(x-y)+v(p)\times[B(x)-B(x-y)]\nonumber\\
 & & +[v(p)-v(p-q)]\times B(x-y),
\end{eqnarray}
we may decompose the second integral by
\begin{displaymath}
 I^K=I^{K,x}+I^{K,p}
\end{displaymath} 
where $I^{K,x}$ involves the first two terms in the right side of (\ref{Lorentz Difference}) and $I^{K,p}$ involves the third term. Now, let $R>0$ and define the set $B_R\times B_R=:\Omega\subset\mathbb{R}^3\times\mathbb{R}^3$ such that $\overline{\Omega}$ + supp $\kappa_{\epsilon_1}\kappa_{\epsilon_2}$ $\subset B_{R+1}\times B_{R+1}$. In view of the assumptions of the lemma  
\begin{displaymath}
  \begin{tabular}{cc}  $\left\|E(x)-E(x-y)\right\|_{L^1_x(B_R)}\leq\left\|\nabla_xE\right\|_{\mathcal{M}(B_{R+1})}\left|y\right|$,& $\ \ \left|y\right|<\epsilon_1$,
  \end{tabular}
\end{displaymath}
(similarly for $B$), where $\left\|\nabla_xE\right\|_{\mathcal{M}(B_{R+1})}<\infty$ denotes the norm of the measure $\nabla_xE$ (resp. $\nabla_xB$), which coincides with the variation of $E$ (resp. $B$) on the ball $B_{R+1}$. Hence, since the relativistic velocity $v\in\left[C^{\infty}_b(\mathbb{R}^3)\right]^3$ satisfies $\left|v\right|\leq1$, we find that for some positive constant $C_R$ that depends on $R$   
  \begin{eqnarray}
  \label{Integral Field X}  
\left\|I^{K,x}\right\|_{L^1_{x,p}(\Omega)}&\leq& C_R\frac{\epsilon_1}{\epsilon_2}\left(\left\|\nabla_xE\right\|_{\mathcal{M}(B_{R+1})}+\left\|\nabla_xB\right\|_{\mathcal{M}(B_{R+1})}\right)\nonumber\\
&&\times \left(\int\left|\epsilon_2\nabla_q\kappa_{\epsilon_2}\right|\right)\sup_{\left|y\right|\leq\epsilon_1,\left|q\right|\leq\epsilon_2}\left\|f(x,p)-f(x-y,p-q)\right\|_{L^{\infty}_{x,p}(\Omega)}. \end{eqnarray}
Similarly, we find the estimates
  \begin{eqnarray}
  \label{Integral Field P}  
\left\|I^{K,p}\right\|_{L^1_{x,p}(\Omega)}&\leq&\left\|B\right\|_{L^2_x}\left\|\nabla_pv\right\|_{L^2_p(B_{R+1})}\nonumber\\
&&\times \left(\int\left|\epsilon_2\nabla_q\kappa_{\epsilon_2}\right|\right)\sup_{\left|y\right|\leq\epsilon_1,\left|q\right|\leq\epsilon_2}\left\|f(x,p)-f(x-y,p-q)\right\|_{L^2_{x,p}(\Omega)}   
 \end{eqnarray}
and
  \begin{eqnarray}
  \label{Integral Velocity}
\left\|I^v\right\|_{L^1_{x,p}(\Omega)} &\leq& C_R\frac{\epsilon_2}{\epsilon_1}\left(\int\left|\epsilon_1\nabla_y\kappa_{\epsilon_1}\right|\right)\left\|\nabla_pv\right\|_{L^2_p(B_{R+1})}\nonumber\\   
&&\times\sup_{\left|y\right|\leq\epsilon_1,\left|q\right|\leq\epsilon_2}\left\|f(x,p)-f(x-y,p-q)\right\|_{L^2_{x,p}(\Omega)}.   
\end{eqnarray}
Now, we have $\left(\int\left|\epsilon\nabla\kappa_{\epsilon}\right|\right)\leq C$, and we also have that 
$$
\sup_{\left|y\right|\leq\epsilon_1,\left|q\right|\leq\epsilon_2}\left\|f(x,p)-f(x-y,p-q)\right\|_{L^2_{x,p}(\Omega)}\rightarrow0,\hspace{.5cm}\texttt{as}\hspace{.3cm}\epsilon_1,\epsilon_2\rightarrow0.
$$
Hence, we can choose two sequences $\epsilon^n_1,\epsilon^n_2\rightarrow0$ with $\epsilon^n_1/\epsilon^n_2=1/n$ such that for some $n$ sufficiently large the right-hand sides of (\ref{Integral Field P}) and (\ref{Integral Velocity}) are less than $1/n$. Therefore, since we also have $f\in L^{\infty}(\Omega)$, it follows that (\ref{Integral Field X}), (\ref{Integral Field P}) and  (\ref{Integral Velocity}) go to zero as $n\rightarrow\infty$, and so does (\ref{Commutator}) in $L^1_{loc}(\mathbb{R}^3\times\mathbb{R}^3)$.

Finally, we consider the dependence in time. The difficulty here seems to arise because the sequences $\epsilon^n_1$ and $\epsilon^n_2$ may also depend on $t$. Otherwise we could just invoke the Lebesgue dominated convergence theorem as (\ref{Integral Field X}) and (\ref{Integral Velocity}) suggest. In particular, we must be careful with the estimate (\ref{Integral Velocity}). Nevertheless, if we keep track of the time dependence along the calculation, we find that
\begin{displaymath}
\left\|I^v\right\|_{L^{1,1}_{t,x,p}((0,T)\times\Omega)}\leq C_R\frac{\epsilon_2}{\epsilon_1}\sup_{\left|y\right|\leq\epsilon_1,\left|q\right|\leq\epsilon_2}\left\|f(t,x,p)-f(t,x-y,p-q)\right\|_{L^{1,2}_{t,x,p}((0,T)\times\Omega)}   
\end{displaymath}
and we can reason as above. This concludes the proof of the lemma.
\end{proof}

We now turn to the energy balance (\ref{Formal Energy Vlasov}) associated to the Vlasov equation. 

\begin{lemma}
\label{Energy Balance Vlasov}
Let $\lambda>0$. In addition to the assumptions of Lemma \ref{Lemma Bouchut}, suppose that $\rho_{\lambda}$ as defined in (\ref{Rho Lambda}) is in $L^{\infty}(]0,T[;L^2(\mathbb{R}^3))$. Then
\begin{equation}
\label{Energy Balance}
\int_{\mathbb{R}^3\times\mathbb{R}^3}\sqrt{1+\left|p\right|^2}f(t)dxdp=\int_{\mathbb{R}^3\times\mathbb{R}^3} \sqrt{1+\left|p\right|^2}f(0)dxdp + \int^t_0\int_{\mathbb{R}^3}E\cdot jdsdx
\end{equation}
for almost all $t\in[0,T[$.
\end{lemma}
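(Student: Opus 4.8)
The plan is to follow the same regularization-duality scheme used for the Maxwell part in Lemma \ref{Lemma 1}, but now applied to the Vlasov equation, using Lemma \ref{Lemma Bouchut} to control the nonlinear commutator. The formal identity driving everything is
\[
K\cdot\nabla_p\sqrt{1+|p|^2} = (E+v\times B)\cdot v = v\cdot E,
\]
since $\nabla_p\sqrt{1+|p|^2}=v$ and $(v\times B)\cdot v=0$. Multiplying the Vlasov equation by $\sqrt{1+|p|^2}$ and integrating should therefore produce the source term $\int E\cdot j\,dx$. The difficulty, exactly as in the Maxwell case, is that $f$ and the fields are too rough to justify this multiplication directly, so I would first mollify in both $x$ and $p$.

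First I would convolve the weak Vlasov equation \eqref{Vlasov Equation} with the product kernel $\kappa_{\epsilon_1^n}\kappa_{\epsilon_2^n}$ from Lemma \ref{Lemma Bouchut}. Writing $f_n := \kappa_{\epsilon_1^n}\kappa_{\epsilon_2^n}\ast f$, the mollified equation reads
\[
\partial_t f_n + \nabla_x\cdot[v f_n] + \nabla_p\cdot[K f_n] = r_n,
\]
where $r_n$ is precisely the commutator \eqref{Commutator}, which by Lemma \ref{Lemma Bouchut} tends to $0$ in $L^1(]0,T[;L^1_{loc})$ along the chosen sequences. Since $f_n$ is now smooth in the phase-space variables, I can legitimately multiply by $\sqrt{1+|p|^2}$ and by a spatial-momentum cut-off $\phi_R$, then integrate by parts. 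Using $v\cdot\nabla_x\sqrt{1+|p|^2}=0$ and $K\cdot\nabla_p\sqrt{1+|p|^2}=v\cdot E$, the transport terms collapse to the energy source, leaving boundary terms from $\nabla\phi_R$ and the commutator contribution. I would then pass $n\to\infty$ to kill $r_n$ (the factor $\sqrt{1+|p|^2}\phi_R$ is bounded on the compact cut-off region, so pairing against $r_n\to0$ in $L^1_{loc}$ is immediate), and then pass $R\to\infty$.

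The key analytic points in the limit are two. For the source term I must show $\int_0^t\int K_n\cdot v\, f_n\,\phi_R \to \int_0^t\int E\cdot j\,\phi_R$; the magnetic part drops by antisymmetry, and the electric part requires passing to the limit in $\int E_n\cdot(v f_n)$. This is where the hypothesis $\rho_\lambda\in L^\infty(]0,T[;L^2)$ enters: it upgrades the integrability of the momentum-weighted density $\int v f\,dp$ enough to pair against $E\in L^2_x$ and secure strong $L^1_{loc}$ convergence of the flux $v f_n \to vf$ together with $E_n\to E$ in $L^2$. For the unbounded weight $\sqrt{1+|p|^2}$ I must verify that the cut-off in $p$ can be removed; here the total-energy bound \eqref{Bound Total Energy} controls $\int\sqrt{1+|p|^2}f$, and the $\lambda$-moment bound via $\rho_\lambda$ controls higher momentum tails, so the truncation error vanishes uniformly in $t$.

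The main obstacle I expect is justifying the convergence of the electric source term $\int E_n\cdot v\,f_n\to\int E\cdot j$ for almost all $t$, because this couples the weak-$\ast$ convergence of the rough field $E$ with the convergence of the nonlinear current built from $f$. The commutator lemma handles the \emph{transport} nonlinearity, but the \emph{energy source} nonlinearity $E\cdot j$ is a separate product of two quantities known only in dual spaces ($E\in L^2_x$ against $j$). The square-integrability of $\rho_\lambda$, which forces $j\in L^2_x$, is exactly the hypothesis that renders this product well-defined and stable under mollification, so the crux of the argument is to exploit it to obtain $j_n\to j$ strongly enough in $L^2_{loc}$ that the pairing with $E_n$ converges. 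Once this is secured, summing \eqref{Energy Balance} with the Maxwell balance \eqref{Poynting} from Lemma \ref{Lemma 1} cancels the $\int E\cdot j$ terms and yields $\mathcal{E}(t)=\mathcal{E}(0)$, proving Theorem \ref{The Theorem}.
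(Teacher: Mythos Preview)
Your approach is essentially the same as the paper's: mollify in $(x,p)$ along the sequences from Lemma~\ref{Lemma Bouchut}, multiply by $\sqrt{1+|p|^2}\phi_R$, integrate by parts using $K\cdot\nabla_p\sqrt{1+|p|^2}=v\cdot E$, send $n\to\infty$, then $R\to\infty$.

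Two remarks. First, there is a small confusion in your write-up: once you write the mollified equation as $\partial_t f_n+\nabla_x\cdot[vf_n]+\nabla_p\cdot[Kf_n]=r_n$, the coefficient is the \emph{original} $K=E+v\times B$, not a mollified $K_n$ or $E_n$. Hence the source term after integration by parts is $\int E\cdot v f_n\,\phi_R$, and the $n\to\infty$ limit only requires $f_n\to f$ in $L^2_{x,p}$ on compact sets (paired against the fixed $E\in L^2_x$); there is no product-of-two-weak-limits issue here. Second, the place where the hypothesis $\rho_\lambda\in L^\infty_t L^2_x$ does real work is not so much the source term as the momentum-boundary term $\int_0^t\!\!\int \sqrt{1+|p|^2}\,f\,K\cdot\nabla_p\phi_R$ in the $R\to\infty$ step: since $\nabla_p\phi_R$ is supported in $R\le|p|\le 2R$ and $|\nabla_p\phi_R|\le C/R$, one gets a bound $\frac{C}{R}\int_0^t\!\!\int(|E|+|B|)\int_{R\le|p|\le 2R}|p|f$, and $\frac{1}{R}\int_{R\le|p|\le 2R}|p|f\le C\rho_\lambda/R^\lambda$ is what drives this to zero. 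You allude to ``higher momentum tails'' but this is the precise mechanism and deserves to be made explicit.
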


\begin{proof} $(f,\;E,\;B)$ is a weak solution of the RVM system. Thus, as a straightforward consequence of Lemma \ref{Lemma Bouchut}, there are two sequences $\epsilon^n_1>0$, $\epsilon^n_2>0$, $\epsilon^n_1\rightarrow0$, $\epsilon^n_2\rightarrow0$ such that         
 \begin{equation}   
 \label{Modified Vlasov}
 \partial_tf^n+v\cdot\nabla_xf^n+K\cdot\nabla_pf^n=r^n
 \end{equation}
converges to $0$ in $L^1(]0,T[;L^1_{loc}(\mathbb{R}^3\times\mathbb{R}^3))$ as $n\rightarrow\infty$, where $f^n:=\kappa_{\epsilon^n_1}\kappa_{\epsilon^n_2}\ast f$ and $r^n$ is defined by (\ref{Commutator}). 

Consider a family of smooth cut-off functions $\phi_R=\phi(\frac{\cdot}{R})$, $R\geq1$ where $\phi\in C^{\infty}_0(\mathbb{R}^6)$, $\phi\geq0$ and $\phi\equiv1$ on $B_1$ $\subset$ supp$\phi$ $\subset B_2$. If we multiply (\ref{Modified Vlasov}) by $\sqrt{1+\left|p\right|^2}\phi_R$ and integrate by parts, we find that  
 \begin{eqnarray}
 \label{Sequences}
\int_{\mathbb{R}^6}\sqrt{1+\left|p\right|^2}f^n(t)\phi_R - \int_{\mathbb{R}^6} \sqrt{1+\left|p\right|^2}f^n(0)\phi_R\hspace{3.cm}\nonumber\\ 
=  \int^t_0\int_{\mathbb{R}^6}E\cdot vf^n\phi_R+\int^t_0\int_{\mathbb{R}^6}\sqrt{1+\left|p\right|^2}\phi_R r^n\hspace{2.0cm}\nonumber\\ 
+ \int^t_0\int_{\mathbb{R}^6}\sqrt{1+\left|p\right|^2}f^nK\cdot\nabla_p\phi_R+ \int^t_0\int_{\mathbb{R}^6}\sqrt{1+\left|p\right|^2}f^nv\cdot\nabla_x\phi_R.\hspace{-.6cm}
\end{eqnarray} 
Here we have used the identity $K\cdot\nabla_p\sqrt{1+\left|p\right|^2}\equiv v\cdot E$.

Let $n\rightarrow\infty$. In doing so, we notice that the second term in the right-hand side vanishes as a consequence of Lemma \ref{Lemma Bouchut}. Also, the convergence of the two terms in the left-hand side and the last term in the right-hand side follow by a straightforward application of the theorem of smooth approximations. Thus, we are led to prove the convergence of the first and third terms in the right-hand side.

Indeed, the reasoning done so far does not preclude us from writing $\phi_R$ as the product of two suitable functions $\chi_R=\chi(\frac{\cdot}{R})$ and $\zeta_R=\zeta(\frac{\cdot}{R})$ where $\chi\in C^{\infty}_0(\mathbb{R}_x^3)$ and $\zeta\in C^{\infty}_0(\mathbb{R}_p^3)$. Hence, 
\begin{eqnarray}
\left|\int_{\mathbb{R}^6}E\cdot v(f-f^n)\phi_R\right|&\leq&\int_{\mathbb{R}^3}\left|E\right|\chi_R\int_{\mathbb{R}^3}\zeta_R\left|f-f^n\right|\nonumber\\
& \leq & C_R\left\|E\right\|_{L^2_x}\left\|f-f^n\right\|_{L^2_{x,p}},\nonumber
\end{eqnarray}     
which converges to zero as $n\rightarrow\infty$. Then, a use of the Lebesgue dominated convergence theorem provide the convergence of the first term in the right-hand side. Since we can do similarly with the remaining term, we find that as $n\rightarrow\infty$, (\ref{Sequences}) converges to  
\begin{eqnarray}
\int_{\mathbb{R}^6}\sqrt{1+\left|p\right|^2}f(t)\phi_R
=\int_{\mathbb{R}^6} \sqrt{1+\left|p\right|^2}f(0)\phi_R+\int^t_0\int_{\mathbb{R}^6}E\cdot vf\phi_R\hspace{.7cm}\nonumber\\ 
+ \int^t_0\int_{\mathbb{R}^6}\sqrt{1+\left|p\right|^2}fK\cdot\nabla_p\phi_R+ \int^t_0\int_{\mathbb{R}^6}\sqrt{1+\left|p\right|^2}fv\cdot\nabla_x\phi_R.\hspace{-.7cm}\nonumber
\end{eqnarray}

Finally, we let $R\rightarrow\infty$ and show that the above equality converges to (\ref{Energy Balance}). The convergences of the term in the left and the first term in the right-hand side are straightforward, since for $t=0$ and for almost all $t>0$, $f(t)\in L^1_{kin}(\mathbb{R}^6)$. Also, since
$$ \left|\int^t_0\int_{\mathbb{R}^6}\sqrt{1+\left|p\right|^2}fv\cdot\nabla_x\phi_R\right|\leq\frac{C}{R}\int^t_0\int_{\mathbb{R}^6}\sqrt{1+\left|p\right|^2}f\leq\frac{C_T}{R},
$$
the last term converges to zero as $R\rightarrow\infty$. In order to obtain the convergence of the second term in the right, we first notice that for any $\lambda>0$, 
\begin{eqnarray}
  \rho(t,x) & = & \int_{\left|p\right|\leq1}f(t,x,p)dp+\int_{\left|p\right|>1}f(t,x,p)dp\nonumber\\
  & \leq & \sqrt{4\pi/3}\left\|f(t,x)\right\|_{L^2_p}+\rho_{\lambda}(t,x)\nonumber.
\end{eqnarray}
Then, the hypothesis made in the lemma implies that 
\begin{equation}
\label{Estimate}
 \left\|\rho(t)\right\|_{L^2_x}\leq \sqrt{4\pi/3}\left\|f(t)\right\|_{L^2_{x,p}}+\left\|\rho_{\lambda}(t)\right\|_{L^2_x}<\infty.
\end{equation}
As a result, and since $\left|v\right|\leq 1$, we can easily verify that $E\cdot vf\in L^1(]0,T[\times\mathbb{R}^6)$, so the Lebesgue theorem provides the expected convergence. Hence, we are only left to show that the third term in the right-hand side converges to zero. To this end, we first produce 
\begin{equation}
\label{Here And Now}
\left|\int^t_0\int_{\mathbb{R}^6}\sqrt{1+\left|p\right|^2}fK\cdot\nabla_p\phi_R\right|\leq\frac{C}{R}\int^t_0\int_{\mathbb{R}^3}\left(\left|E\right|+\left|B\right|\right)\int_{R\leq\left|p\right|\leq 2R}\left|p\right|f.
\end{equation}
To estimate the above inequality we observe that 
$$ \frac{1}{R}\int_{R\leq\left|p\right|\leq 2R}\left|p\right|f\leq\left\{\begin{array}{lll}
       2^{1-\lambda}\rho_{\lambda}/R^{\lambda} &, & 0<\lambda<1 \\   
       \rho_{\lambda}/R^{\lambda} &, & 1\leq\lambda \end{array}.\right. 
$$
Thus, for any $\lambda>0$, there exists a constant $C>0$ independent of $R$ such that the right-hand side of (\ref{Here And Now}) is less or equal than 
$$ \frac{C}{R^{\lambda}}\int^t_0\left(\left\|E(s)\right\|_{L^2_x}+\left\|B(s)\right\|_{L^2_x}\right)\left\|\rho_{\lambda}(s)\right\|_{L^2_x}ds\leq\frac{C_T}{R^{\lambda}}.
$$
Therefore, (\ref{Here And Now}) converges to zero as $R\rightarrow\infty$ and the proof of the lemma is complete.
\end{proof}

\section{Proof of Theorem \ref{The Theorem}}

\begin{proof}
Since $\left|v\right|\leq1$, we have $\left|j\right|\leq\rho$. Then, in view of (\ref{Estimate}), we can combine Lemmas \ref{Lemma 1} and \ref{Energy Balance Vlasov} to produce the equality for almost all $0\leq t<T$ claimed for (\ref{Bound Total Energy}).
\end{proof}


          %







          %

\medskip

{\bf Acknowledgement.} I am grateful to Prof. R. Illner for useful discussions and insightful comments concerning the subject matter of this paper.
\medskip

\end{document}